\documentclass[11pt,leqno]{amsart}
\usepackage[mathcal]{eucal}
\usepackage{hyperref}
\usepackage{amssymb,amsbsy}

\addtolength{\hoffset}{-1cm}
\addtolength{\textwidth}{1.6cm}
\addtolength{\voffset}{-1cm}
\addtolength{\textheight}{1.6cm}


\newtheorem{theorem}{Theorem}[section]
\newtheorem{lemma}[theorem]{Lemma}
\newtheorem{proposition}[theorem]{Proposition}
\newtheorem{corollary}[theorem]{Corollary}

\theoremstyle{definition}
\newtheorem{example}[theorem]{Example}
\newtheorem{fact}[theorem]{Fact}

\newtheorem*{gtheorem*}{Gasch\"utz' Lemma}
\newtheorem*{nproblem*}{Neumanns' Problem}

\numberwithin{equation}{section}


\DeclareMathOperator{\PSL}{PSL}
\DeclareMathOperator{\Aut}{Aut}
\DeclareMathOperator{\Alt}{Alt}
\DeclareMathOperator{\Sym}{Sym}
\DeclareMathOperator{\SL}{SL}
\DeclareMathOperator{\GL}{GL}
\DeclareMathOperator{\tr}{tr}


\title[Generating pairs that fail to lift]{Generating pairs of
  projective special linear groups that fail to lift}

\author[J. Boschheidgen]{Jan Boschheidgen} 
\address{J.\ Boschheidgen: Departamento de Matematicas, Universidad Aut\'{o}noma de Madrid, and Instituto de Ciencias Matem\'{a}ticas, 28049 Madrid, Spain} \email{jan.boschheidgen@uam.es}

\author[B. Klopsch]{Benjamin Klopsch} 
\address{B.\ Klopsch:
  Mathematisches Institut, Heinrich-Heine-Universit\"at, 40225
  D\"usseldorf, Germany} \email{klopsch@math.uni-duesseldorf.de}

\author[A. Thillaisundaram]{Anitha Thillaisundaram} 
 \thanks{The third author acknowledges the support from the Alexander
  von Humboldt Foundation and from the Forscher-Alumni-Programm of the Heinrich-Heine-Universit\"at D\"usseldorf (HHU); she thanks HHU for its hospitality.}
\address{A.\ Thillaisundaram: School of Mathematics and Physics,
  University of Lincoln, LN6 7TS Lincoln, United Kingdom}
\email{anitha.t@cantab.net}

\dedicatory{In memory of Wolfgang Gasch\"utz (1920--2016) on the
  centenary of his birth}

\keywords{Generating tuples, free products, projective special linear
  groups}

\subjclass[2010]{Primary  20F05;  Secondary 20E06, 20G40}


 
\begin{document}

\begin{abstract}
  The following problem was originally posed by B.\,H.~Neumann and
  H.~Neumann.  Suppose that a group $G$ can be generated by $n$
  elements and that $H$ is a homomorphic image of~$G$.  Does there
  exist, for every generating $n$-tuple $(h_1,\ldots, h_n)$ of $H$, a
  homomorphism $\vartheta \colon G \to H$ and a generating $n$-tuple
  $(g_1,\ldots,g_n)$ of $G$ such that
  $(g_1^{\, \vartheta},\ldots,g_n^{\, \vartheta}) = (h_1,\ldots,h_n)$?
  
  M.\,J.~Dunwoody gave a negative answer to this question, by means of
  a carefully engineered construction of an explicit pair of soluble
  groups.  Via a new approach we produce, for $n = 2$, infinitely many
  pairs of groups $(G,H)$ that are negative examples to the Neumanns'
  problem.  These new examples are easily described: $G$ is a free
  product of two suitable finite cyclic groups, such as
  $C_2 \ast C_3$, and $H$ is a suitable finite projective special
  linear group, such as $\PSL(2,p)$ for a prime $p \ge 5$.  A small
  modification yields the first negative examples $(G,H)$ with $H$
  infinite.
\end{abstract}

\maketitle


\section{Introduction}

The following question about lifting finite generating tuples along
homomorphisms of groups was originally raised by B.\,H.~Neumann and
H.~Neumann~\cite{NeNe51}.

\begin{nproblem*} Suppose that $G$ is a group which can be generated
  by $n$ elements and let $H$ be a homomorphic image of $G$. Does
  there exist, for every generating $n$-tuple $(h_1,\ldots, h_n)$ of
  $H$, a homomorphism $\vartheta \colon G \to H$ and a generating
  $n$-tuple $(g_1,\ldots,g_n)$ of $G$ such that
  $(g_1^{\, \vartheta},\ldots,g_n^{\, \vartheta}) = (h_1,\ldots,h_n)$?
\end{nproblem*}

By means of an ingenious counting trick, W.~Gasch\"utz showed
in~\cite{Ga56} that under an additional finiteness condition the
answer is yes; indeed, a significantly stronger result holds.

\begin{gtheorem*} Let $G$ be a group which can be generated by $n$
  elements and let $\vartheta \colon G \to H$ be a surjective
  homomorphism with \emph{finite} kernel.  Then, for every generating
  $n$-tuple $(h_1,\ldots,h_n)$ of $H$, there exists a generating
  $n$-tuple $(g_1,\ldots, g_n)$ of $G$ such that
  $(g_1^{\, \vartheta},\ldots,g_n^{\, \vartheta})=(h_1,\ldots,h_n)$.
\end{gtheorem*}

A profinite version of Gasch\"utz' Lemma has manifold applications in
the theory of profinite groups; compare~\cite[\S 2]{Lu01}.  For a
recent generalisation to metrisable compact groups see~\cite{CoGe18}.

However, in general one cannot expect that arbitrary generating tuples
will lift along a single, fixed epimorphism.  For instance, take
$\vartheta \colon C_{\infty}\to C_5$, a homomorphism from an infinite
cyclic group onto a group of order~$5$.  Then, of course, only two of
the four generators of $C_5$ lift along $\vartheta$ to generators of
$C_\infty$.  As Gasch\"utz pointed out, this simple example
illustrates the inherent limitation in his theorem, but falls short of
settling the Neumanns' Problem (in the negative). 

Eventually M.\,J.~Dunwoody~\cite[\S 3]{Du63} gave a negative answer to
the Neumanns' Problem (and indeed negative answers to several related
questions), by means of a carefully engineered construction of an
explicit pair of $2$-generated groups $(G,H)$, where $H$ is a
homomorphic image of $G$, but $H$ admits a generating pair that does
not lift back to a generating pair of $G$ along any homomorphism.  In
this example, $H$ is a (nilpotent of class $2$)-by-(nilpotent of class
$2$) group of order $5^3 \, 11^{10}$; the group $G$ is an extension of an
(infinite) abelian group by~$H$.

It is largely unknown how `frequently' negative examples to the
Neumanns' Problem should be expected to occur, when one restricts $G$
or $H$ to more special classes of groups.  The main purpose of the
present paper is to provide, for $n=2$, an infinite family of negative
examples $(G,H)$ to the Neumanns' Problem whose nature is rather
different from the groups arising from Dunwoody's construction.
Indeed, our main examples arise from free products of cyclic groups
$G \cong C_2 \ast C_m$ mapping onto finite projective special linear
groups $H \cong \PSL(2,q)$.

\begin{theorem}\label{thm:main-result}
  For the following pairs of $2$-generated groups, the group $H$ is a
  homomorphic image of $G$, but there exist generating pairs
  $(h_1,h_2)$ of $H$ such that for all homomorphisms
  $\vartheta \colon G \to H$ and all generating pairs $(g_1,g_2)$ of
  $G$, we have
  $(g_1^{\, \vartheta},g_2^{\, \vartheta}) \ne (h_1,h_2)$.

  \begin{enumerate}
  \item[(i)] $G = C_2 \ast C_3$ and $H = \PSL(2,q)$, where
    $q$ is a prime power such that $q \ge 4$, but $q \ne 9$;
  \item[(ii)] $G = C_2 \ast C_p$ and $H = \PSL(2,q)$, where $q = p^k$
    is a power of a prime $p \ge 3$ such that $q \ge 7$, but
    $q \ne 9$;
  \item[(iii)] $G = C_2 \ast C_m$ and $H = \PSL(2,q)$, where
    $m \in \mathbb{N}$ and $q = p^k$ is a power of a prime $p$ such
    that $q \equiv_4 3$, but $q \ne 3$,  and
    \[
    p \mid m \quad \vee \quad \gcd(m,(q + 1)/2) \ge 3 \quad \vee \quad
    \gcd(m,(q - 1)/2) \ge 3;
    \]
  \item[(iv)] $G = C_3 \ast C_3$ and $H =\PSL(2,q)$, where
    $q$ is a prime power such that $q\ge 5$.
  \end{enumerate}
\end{theorem}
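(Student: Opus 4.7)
The plan is to attach to each generating pair of $H = \PSL(2,q)$ a Nielsen-equivalence invariant $\kappa$ in $\mathbb{F}_q$, and then to exhibit, in each of the cases (i)--(iv), generating pairs of $\PSL(2,q)$ whose $\kappa$-value is not realised by the image of any homomorphism $\vartheta \colon G \to H$. First I would invoke the classical Nielsen--Zieschang theorem: in a free product of two finite cyclic groups, every generating pair is Nielsen-equivalent to a pair of the form $(x^a, y^b)$, where $x, y$ are the standard generators of the two free factors and $a, b$ are coprime to $\lvert x\rvert, \lvert y\rvert$. Consequently, for any $\vartheta$ and any generating pair $(g_1, g_2)$ of $G$, the image pair $(\vartheta(g_1), \vartheta(g_2))$ is Nielsen-equivalent in $H$ to $(\vartheta(x)^a, \vartheta(y)^b)$ for some such $a, b$.

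Next I would introduce the Fricke commutator-trace as the invariant $\kappa$. For $(h_1, h_2) \in H^2$ I pick lifts $\tilde h_1, \tilde h_2 \in \SL(2, q)$; the two sign ambiguities cancel in the commutator, so
\[
  \kappa(h_1, h_2) \;:=\; \tr[\tilde h_1, \tilde h_2] \;\in\; \mathbb{F}_q
\]
is well defined. The Fricke identity
\[
  \kappa = (\tr \tilde h_1)^2 + (\tr \tilde h_2)^2 + (\tr \tilde h_1 \tilde h_2)^2 - (\tr \tilde h_1)(\tr \tilde h_2)(\tr \tilde h_1 \tilde h_2) - 2,
\]
combined with the $\SL(2)$-relation $\tr(XY^{-1}) = \tr X \cdot \tr Y - \tr XY$, makes $\kappa$ invariant under the elementary Nielsen transformations, and hence constant on Nielsen-equivalence classes in $H$.

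Combining these two steps, whenever $(h_1, h_2) = (\vartheta(g_1), \vartheta(g_2))$ for some generating pair of $G$, the equality $\kappa(h_1, h_2) = \kappa(\vartheta(x)^a, \vartheta(y)^b)$ will force an explicit constraint. Writing $\alpha = \tr \vartheta(x)^a$, $\beta = \tr \vartheta(y)^b$ and $\gamma = \tr(\vartheta(x)^a \vartheta(y)^b)$, the Fricke identity becomes a quadratic equation in $\gamma$ whose discriminant $\alpha^2 \beta^2 - 4\alpha^2 - 4\beta^2 + 8 + 4\kappa$ must be a square in $\mathbb{F}_q$ for some admissible pair $(\alpha, \beta)$. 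The allowed $\alpha, \beta$ are the traces in $\mathbb{F}_q$ of $\SL(2, q)$-lifts of elements whose orders divide $\lvert x\rvert$ or $\lvert y\rvert$; these form small, explicitly computable subsets (for example $\alpha = 0$ for involutions, $\alpha^2 = 1$ for order-three elements, $\beta^2 = 4$ for unipotents in odd characteristic). In case~(i) the condition reduces to ``$\kappa + 1$ is a square in $\mathbb{F}_q$''; in case~(ii) to ``$\kappa - 2$ is a square''; in case~(iv) to ``$1 + 4\kappa$ is a square''; in case~(iii) to a disjunction of such squareness conditions indexed by the divisors of~$m$.

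The hardest step will be to exhibit, for each case and each admissible $q$, a generating pair of $\PSL(2, q)$ whose $\kappa$-value violates the appropriate squareness condition. I would write down concrete matrix representatives in $\SL(2, q)$ (typically a unipotent together with an element of a split or non-split torus), compute $\kappa$ from the Fricke formula, and verify non-squareness by a Legendre-symbol calculation; generation of $H$ would be checked via Dickson's classification of subgroups of $\PSL(2, q)$. Extra care is needed in characteristic two, where squareness degenerates in $\mathbb{F}_q$, so one would resort either to a supplementary invariant or to the exceptional isomorphisms $\PSL(2, 4) \cong \PSL(2, 5)$ and $\PSL(2, 9) \cong \Alt(6)$. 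The arithmetic exclusions in the statement---$q \ne 9$, the congruence $q \equiv_4 3$ and the divisibility alternatives in (iii), and the small-$q$ lower bounds---correspond precisely to the borderline values for which the realisable $\kappa$-values fill up $\mathbb{F}_q$ and the invariant ceases to obstruct lifting.
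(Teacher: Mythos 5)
Your overall framework coincides with the paper's: both reduce the lifting question to Nielsen classes of generating pairs (the paper via the Grushko--Neumann fact that $\Gamma_{C_m\ast C_n}$ is a single $\Aut(F_2)$-orbit, you via the equivalent normal form $(x^a,y^b)$), and both use the commutator trace as the Nielsen-class invariant separating a suitable generating pair of $H$ from every $(m,n)$-generating pair. Where you genuinely diverge is in how trace values get excluded. You propose a uniform Fricke-identity argument, reading $\kappa=\alpha^2+\beta^2+\gamma^2-\alpha\beta\gamma-2$ as a quadratic in $\gamma$ and requiring its discriminant $\alpha^2\beta^2-4\alpha^2-4\beta^2+8+4\kappa$ to be a square. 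In odd characteristic this works and is attractive: for case (ii) it reproduces verbatim the paper's computation $\kappa=s^2+2$; for case (iii) with $\kappa=-2$ the discriminant is $-4\beta^2$, a non-square for $\beta\neq 0$ precisely because $q\equiv_4 3$ (and $\beta=0$ forces a dihedral subgroup), recovering the paper's key proposition; and for case (iv) it subsumes the two hand-checked lemmas for $q\in\{5,7\}$. The paper instead handles cases (i) and (iv) with Miller's classification of groups generated by elements of orders $2,3$ (resp.\ $3,3$) whose commutator has order $2$ or $3$ --- a group-theoretic rather than arithmetic obstruction. Note also that you still need McCullough--Wanderley (the paper's Theorem~2.4) or an equivalent computation to know that a generating pair with your chosen bad $\kappa$ exists, and that the hypothesis ``$H$ is a quotient of $G$'' is a separate input (Macbeath, Langer--Rosenberger) that your plan does not address.

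The genuine gap is characteristic $2$, which case (i) contains in infinite supply ($q=2^k\ge 4$) and case (iv) as well ($q=2^k\ge 8$). There every element of $\mathbb{F}_q$ is a square, and the discriminant itself is obtained by completing the square, which is impossible. For case (i) the trace method collapses entirely: an involution in $\SL(2,2^k)$ has trace $0$ and an element of order $3$ has trace $1$, so the Fricke identity gives $\kappa=\gamma^2+1$ with $\gamma$ unconstrained; since squaring is bijective on $\mathbb{F}_{2^k}$, every $\kappa$ is realised by some $(2,3)$-trace-triple and no obstruction can be read off from traces alone. Your fallbacks cover only $q=4$ (via $\PSL(2,4)\cong\PSL(2,5)$); for $q=8,16,32,\dots$ you need a different idea --- the paper's is that $\tau=\pm1$ forces the commutator to have order $3$, and Miller's theorem shows that a group generated by elements of orders $2$ and $3$ with commutator of order $3$ is soluble. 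For case (iv) in characteristic $2$ your quadratic becomes the Artin--Schreier equation $\gamma^2+\gamma+\kappa=0$, so the correct ``supplementary invariant'' is the absolute trace $\mathrm{Tr}_{\mathbb{F}_q/\mathbb{F}_2}(\kappa)$ rather than squareness; that does salvage (iv), but it must be said explicitly rather than deferred to an unspecified device.
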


The groups of the form $C_2 \ast C_m$, where $m \ge 3$, arise in
number theory as Hecke groups, acting on the upper half of the complex
plane; e.g., see~\cite{IkSaCa09}.  The most prominent member of this
family is the modular group $\PSL(2,\mathbb{Z}) \cong C_2 \ast C_3$,
which clearly maps onto $\PSL(2,p)$ for any prime~$p$.  As expected,
the infinite dihedral group $G \cong C_2 \ast C_2$ does \emph{not} yield
negative examples to the Neumanns' Problem; see
Proposition~\ref{Diedergruppe}.

As a first step toward Theorem~\ref{thm:main-result}, one needs to
verify that the groups $H$ are actually homomorphic images of the
relevant free product~$G$.  Fortunately, the subgroups of finite
projective special linear groups of degree $2$ were completely
described by L.\,E.\ Dickson; compare~\cite[Thm.~II.8.27]{Hu67}.
Based on this classification, A.\,M.~Macbeath~\cite{Mc67} studied
triples $(A,B,C)$ of $\SL(2,q)$ such that $ABC=1$ with a view toward
determining $\langle \overline{A},\overline{B} \rangle \le \PSL(2,q)$
according to the traces of $A,B,C$.  In particular, he showed that
$\PSL(2,q)$ is $(2,3)$-generated unless $q = 9$; we refer to
\cite{Pe17} for a complete overview about $(2,3)$-generation of
arbitrary (projective) special linear groups over finite fields and
references to related results.

In~\cite{LaRo89}, U.\ Langer and G.\ Rosenberger use Macbeath's
results to determine necessary and sufficient conditions for
$\PSL(2,q)$ to be the quotient of a given triangle group, with
torsion-free kernel.  Generalising a result of J.\,L.\ Brenner and J.\
Wiegold, they establish (ibid., Satz~5.3) the following theorem.  For
every prime power $q \not \in \{ 2,4,5,9 \}$ and all integers $m \ge 2$,
$n \ge 3$, the group $\PSL(2,q)$ is $(m,n)$-generated provided that
$m, n$ are possible element orders in $\PSL(2,q)$.

We recall that $m$ is a possible element order in $\PSL(2,q)$, where
$q = p^k$ is a power of a prime~$p$, if and only if
\[
m = p \qquad \vee \qquad
  m \mid \frac{q + 1}{\gcd(2,q-1)} \qquad \vee \qquad
  m \mid \frac{q - 1}{\gcd(2,q-1)}. 
\]
This explains, for instance, the assumptions in
Theorem~\ref{thm:main-result}(iii), following the condition
$q \equiv_4 3$.  Generation properties of $\PSL(2,q)$ for
$q \in \{ 2,4,5,9 \}$ can, of course, be dealt with by direct
computation: $\PSL(2,9) \cong \Alt(6)$ is not $(2,3)$-generated, but
it is $(3,3)$-generated; $\PSL(2,5) \cong \Alt(5)$ is
$(2,5)$-generated, but generating pairs lift, as we state in
Proposition~\ref{pro:PSL25-lift}.

In Dunwoody's example and all our examples thus far, the group $G$ is
$2$-generated and infinite, whereas the homomorphic image $H$ is
finite.  By a small modification, we obtain the first negative
examples $(G,H)$ to the Neumanns' Problem with $H$ infinite.

\begin{corollary} \label{cor:infinite-H}
  There are negative examples $(G,H)$ to the Neumanns' Problem, where
  $H$ is infinite.  For instance, $G = \PSL(2,\mathbb{Z})$ and
  $H = \PSL(2,5) \times G/G''$ yield such an example: $H$ is a
  homomorphic image of $G$, but $H$ admits generating pairs that do
  not lift to generating pairs for $G$ along any homomorphism.
\end{corollary}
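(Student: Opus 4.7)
My plan is to inherit the lifting failure from Theorem~\ref{thm:main-result}(i) applied to $\PSL(2,5)$, by tensoring a non-lifting generating pair with a suitable pair in the metabelian quotient $G/G''$; Goursat's lemma will handle the bookkeeping.

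First, I would verify that $H = \PSL(2,5) \times G/G''$ is an infinite homomorphic image of $G = \PSL(2,\mathbb{Z}) \cong C_2 \ast C_3$. Infinitude is clear because $G'$ has index~$6$ in $G$ and is free of rank~$2$ (by the Kurosh subgroup theorem, or a computation with the Euler characteristic $\chi(G) = -\tfrac{1}{6}$), so $G/G''$ is infinite. To obtain $H$ as a quotient, let $\pi_1 \colon G \twoheadrightarrow \PSL(2,5)$ be a surjection (which exists by Theorem~\ref{thm:main-result}(i) with $q = 5$) and let $\pi_2 \colon G \twoheadrightarrow G/G''$ be the canonical projection; then the product map $(\pi_1,\pi_2) \colon G \to H$ is surjective by Goursat's lemma, since any common quotient of $\PSL(2,5)$ and $G/G''$ is at once a quotient of a simple non-abelian group and of a soluble group, and hence trivial.

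Next, by Theorem~\ref{thm:main-result}(i) applied to $q = 5$, I choose a generating pair $(x_1,x_2)$ of $\PSL(2,5)$ that fails to lift to a generating pair of $G$ along any homomorphism $G \to \PSL(2,5)$, and I pick any generating pair $(y_1,y_2)$ of $G/G''$ (for instance the images of the standard generators of $C_2 \ast C_3$). Set $h_i := (x_i,y_i) \in H$. A second application of Goursat's lemma, again using that $\PSL(2,5)$ and $G/G''$ share no non-trivial common quotient, shows that $\langle h_1,h_2 \rangle = H$, because both coordinate projections of $\langle h_1, h_2 \rangle$ are already surjective.

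Finally, the non-lifting property transfers from the first factor as follows. If $\vartheta \colon G \to H$ were a homomorphism and $(g_1,g_2)$ a generating pair of $G$ satisfying $(g_1^{\,\vartheta},g_2^{\,\vartheta}) = (h_1,h_2)$, then composing $\vartheta$ with the first-factor projection $H \to \PSL(2,5)$ would yield a homomorphism along which the generating pair $(g_1,g_2)$ of $G$ maps to $(x_1,x_2)$, contradicting the choice of $(x_1,x_2)$. The only real point of care is the double use of Goursat's lemma, and this is routine once one notes that $G/G''$, being metabelian, is soluble while $\PSL(2,5)$ is simple and non-abelian; I do not anticipate any genuine obstacle.
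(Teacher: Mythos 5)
Your proof is correct and takes essentially the same approach as the paper: both arguments rest on the fact that $\PSL(2,5)$ is perfect while $G/G''$ is soluble, so that every subdirect product of the two factors is the full direct product (the paper phrases your Goursat step directly via second derived subgroups), and both transfer the non-lifting property by composing a hypothetical lift with the projection onto the $\PSL(2,5)$ factor.
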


Similar to Dunwoody, we focus in this paper exclusively on
$2$-generated groups and on lifting generating pairs.  It remains a
challenge to produce negative examples $(G,H)$ to the Neumanns'
Problem for $n\ge 3$, where $G$ and $H$ are $n$-generated and certain
generating $n$-tuples do not lift back.  It is conceivable that such
examples can be found as a byproduct of a systematic study of the
Neumanns' Problem for pairs $(G,H)$, where $G$ is a finite free
product of cyclic groups.  Theorem~\ref{thm:main-result}(iv) can be
seen as a first minor step toward such a systematic study.

Finally, we think it would be interesting to investigate to what
degree the extra condition $q \equiv_4 3$ in
Theorem~\ref{thm:main-result}(iii) is needed.  Our proof relies on
this assumption and breaks down, for instance, for $G = C_2 \ast C_m$
and $H = \PSL(2,q)$ with
$(m,q) \in \{ (7,13), (19,37) , (20,41), (21,41) \}$: one can check by
direct computer calculation that, in these cases, the trace invariants
associated to relevant generating pairs $(\overline{A},\overline{B})$
assume all possible values, apart from~$2$.  This stands in contrast
to the situations that we deal with.

\medskip



\noindent \textbf{Acknowledgements.}  The initial ideas for our paper
stem from an unpublished manuscript of the second author, which
contained a self-contained number-theoretic proof that, for every
prime $p \ge 5$, there exist generating pairs of $H = \PSL(2,p)$ which
do not lift to $G = \PSL(2,\mathbb{Z}) \cong C_2 \ast C_3$.  The third
author thanks Sandro Mattarei and Alex Bartel for helpful discussions.
We acknowledge the referee's feedback which led to improvements of the
presentation.


\section{Preliminary set-up}\label{sec2}
Let $G$ be a $2$-generated group.  There is a natural semi-regular
action of $\Aut(G)$ on the set
\[
\Gamma_G = \{ (g_1,g_2) \in G \times G \mid \langle g_1, g_2 \rangle
= G \}
\]
of all generating pairs of $G$ under which $\alpha \in \Aut(G)$ sends
$(g_1,g_2) \in \Gamma_G$ to $(g_1^{\, \alpha},g_2^{\, \alpha})$.

We are more interested in the action of the automorphism group
$\mathcal{A} = \Aut(F)$ of a free group $F = \langle x,y \rangle$ of
rank two on $\Gamma_G$ defined as follows. There is a bijection
\begin{align*}
  \Phi \colon \Gamma_G & \to \mathcal{E} = \{ \eta \mid \eta \colon F
                         \twoheadrightarrow G \text{ a surjective homomorphism} \}, \\
  (g_1,g_2) & \mapsto \quad \eta_{g_1,g_2} \colon F \twoheadrightarrow
              G, \, w(x,y) \mapsto w(g_1,g_2),
\end{align*}
where $w(x,y)$ denotes any word in $x,y$.  The group $\mathcal{A}$
permutes the elements of $\mathcal{E}$, via
$\eta^\alpha = \alpha^{-1} \eta$ for $\eta \in \mathcal{E}$ and
$\alpha \in \mathcal{A}$, and this action pulls back along $\Phi$ to an
action of $\mathcal{A}$ on $\Gamma_G$.

It is well-known that $\mathcal{A}$ is generated by the basic
Nielsen transformations
\begin{align*}
  \alpha_1 \colon F \to F, & \quad w(x,y) \mapsto w(x^{-1},y), \\
  \alpha_2 \colon F \to F, & \quad w(x,y) \mapsto w(xy,y), \\
  \alpha_3 \colon F \to F, & \quad w(x,y) \mapsto w(y,x).
\end{align*}
Application of these to an element $(g_1,g_2) \in \Gamma_G$ via $\Phi$ yields
\begin{equation}\label{nielsen}
  (g_1,g_2)^{\alpha_1} = (g_1^{-1},g_2), \quad (g_1,g_2)^{\alpha_2} =
  (g_1 g_2^{-1},g_2), \quad (g_1,g_2)^{\alpha_3} = (g_2,g_1).
\end{equation}

We note that, if $\vartheta \colon G \twoheadrightarrow H$ is an
epimorphism of groups, then $\vartheta$ induces a map
$\vartheta^* \colon \Gamma_G \to \Gamma_H, (g_1,g_2) \mapsto (g_1^{\,
  \vartheta},g_2^{\, \vartheta})$
which commutes with the action of $\mathcal{A}$ on $\Gamma_G$ and
$\Gamma_H$ respectively; i.e., for every $\alpha \in \mathcal{A}$ and
all $(g_1,g_2) \in \Gamma_G$ we have
\begin{equation}\label{diagram}
  ((g_1,g_2)^\alpha) \vartheta^* = ((g_1,g_2) \vartheta^*)^\alpha.
\end{equation}

In general, neither $\mathcal{A}$ nor the permutation group $P$
induced by $\mathcal{A}$ and $\Aut(G)$ is necessarily going to act
transitively on $\Gamma_G$. 

\begin{example} \label{exa:Alt5}
  For instance, if $G = \PSL(2,5) \cong \Alt(5)$ is the alternating
  group of degree~$5$, then $\Gamma_G$ consists of $2280$
  elements.  They fall into nineteen $\Aut(G)$-orbits, each of length
  $120$, and into two $P$-orbits of length $1080 = 9 \cdot 120$ and
  $1200 = 10 \cdot 120$ respectively.  For details see \cite{Ha36} and
  \cite[\S 10]{NeNe51}.  By means of a direct calculation one can see
  that in the same example $\Gamma_G$ splits into three
  $\mathcal{A}$-orbits of size $600 = 5 \cdot 120$,
  $600 = 5 \cdot 120$, and $1080 = 9 \cdot 120$, respectively.
\end{example}

But under special circumstances $\mathcal{A}$ acts transitively on
$\Gamma_G$, e.g., if $G$ is abelian or if $G$ is free. We require the
following not so obvious fact.

\begin{fact}\label{one-orbit}
  If $G \cong C_m \ast C_n$ for $m,n\in \mathbb{N}$, then $\Gamma_G$
  consists of a single $\mathcal{A}$-orbit.
\end{fact}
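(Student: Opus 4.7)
The plan is to prove the Fact via Nielsen reduction in free products, a technique pioneered by Nielsen for free groups and extended to free products by Zieschang.

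Set $G = C_m \ast C_n = \langle a \rangle \ast \langle b \rangle$ with $\lvert a \rvert = m$ and $\lvert b \rvert = n$. Given $(g_1, g_2) \in \Gamma_G$, define the syllable length $\ell(g)$ of $g \in G$ as the length of its reduced expression as an alternating product of non-trivial elements of $\langle a \rangle$ and $\langle b \rangle$, and set $\lvert (g_1, g_2) \rvert := \ell(g_1) + \ell(g_2)$. The goal is to show $(g_1, g_2)$ lies in the $\mathcal{A}$-orbit of $(a, b)$.

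\emph{Step 1 (normal form via length reduction).} The first step is to apply the basic Nielsen transformations $\alpha_1, \alpha_2, \alpha_3$ iteratively so as to minimise $\lvert (g_1, g_2) \rvert$ within its $\mathcal{A}$-orbit. A cancellation analysis in the style of the standard Nielsen reduction for free products (see, e.g., Lyndon--Schupp, \emph{Combinatorial Group Theory}, Chapter~I, or Zieschang--Vogt--Coldewey, \emph{Surfaces and Planar Discontinuous Groups}) shows that at a minimum each $g_i$ must lie in a conjugate of one of the two cyclic factors. The generation hypothesis then forces both factors to be represented. After possibly applying $\alpha_3$ to swap coordinates, and using $\alpha_2^{\pm 1}$ iteratively to absorb conjugators, one reaches a normal form $(g_1, g_2) \sim_{\mathcal{A}} (a^i, b^j)$ with $\gcd(i, m) = \gcd(j, n) = 1$.

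\emph{Step 2 (uniqueness of normal form).} It then remains to show that all pairs $(a^i, b^j)$ with $\gcd(i, m) = \gcd(j, n) = 1$ are $\mathcal{A}$-equivalent. Sign changes $(i, j) \mapsto (-i, j)$ and $(i, j) \mapsto (i, -j)$ are realised by $\alpha_1$ and $\alpha_3 \alpha_1 \alpha_3$ respectively. For arbitrary admissible exponents, one builds explicit Nielsen sequences guided by the Euclidean algorithm: combinations of $\alpha_2^{\pm 1}$ and $\alpha_3$ realise the elementary $\SL(2, \mathbb{Z})$-row operations on the abelianised pair in $(G^{\mathrm{ab}})^2 = (C_m \oplus C_n)^2$, and the surjectivity of the reduction map $\SL(2, \mathbb{Z}) \twoheadrightarrow \SL(2, \mathbb{Z}/m) \times \SL(2, \mathbb{Z}/n)$ ensures that all admissible exponent pairs are reachable from $(a,b)$ by such sequences.

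The main obstacle is the cancellation analysis in Step~1: Nielsen moves in a free product can both create and destroy cancellation in reduced words, so one must verify that every non-normal-form pair admits a length-reducing transformation, possibly only after a preparatory move that temporarily increases length. This delicate combinatorial argument is the technical core of the Nielsen--Schreier--Zieschang reduction method for free products; Step~2, by contrast, is essentially a manipulation of matrices mod $m$ and mod $n$ together with a finite check.
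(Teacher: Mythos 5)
Step 1 of your proposal is sound in outline and is essentially the route the paper itself indicates: the Grushko--Neumann Theorem (provable by exactly the kind of Nielsen/Lyndon cancellation you sketch) produces a basis $(u,v)$ of $F$ with $\eta(u)=a^{i}$ and $\eta(v)=b^{j}$, where $\gcd(i,m)=\gcd(j,n)=1$, so every generating pair of $G=\langle a\rangle\ast\langle b\rangle$ is $\mathcal{A}$-equivalent to a ``diagonal'' pair $(a^{i},b^{j})$. That much is correct, and it is in fact all that Corollary~\ref{cor:orbits} requires: its forward direction only needs the lifted pair to be equivalent to \emph{some} pair $(\tilde g_1,\tilde g_2)$ with $\tilde g_1^{\,m}=\tilde g_2^{\,n}=1$.

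Step 2 is a genuine gap, and it cannot be repaired. First, the logic is a non sequitur: a Nielsen sequence realising prescribed row operations on the images in $G^{\mathrm{ab}}=C_m\times C_n$ carries $(a^{i},b^{j})$ to \emph{some} generating pair whose abelianisation is $(\bar a,\bar b)$, not to $(a,b)$ itself; in a non-abelian free product this controls essentially nothing. (Moreover, $\SL(2,\mathbb{Z})\to\SL(2,\mathbb{Z}/m)\times\SL(2,\mathbb{Z}/n)$ is surjective only when $\gcd(m,n)=1$; for $d=\gcd(m,n)>1$ the image is the fibre product over $\SL(2,\mathbb{Z}/d)$, and already the abelianised claim fails, e.g.\ by the determinant invariant in $(\mathbb{Z}/d)^{\times}/\{\pm1\}$.) Second, and decisively, the conclusion of Step 2 is false in general, so no argument can close the gap: in $G=C_2\ast C_5$ the pairs $(a,b)$ and $(a,b^{2})$ both generate, yet $[a,b]=ab^{4}ab$ and $[a,b^{2}]=ab^{3}ab^{2}$ are cyclically reduced of syllable length four with distinct cyclic words, so $[a,b^{2}]$ is conjugate to neither $[a,b]$ nor $[b,a]=[a,b]^{-1}$; by the Higman invariant $c(\Omega)$ introduced immediately after the Fact, these pairs lie in distinct $\mathcal{A}$-orbits. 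The same invariant shows that $(a^{i},b^{j})\sim_{\mathcal{A}}(a^{i'},b^{j'})$ forces $i'\equiv\pm i\pmod m$ and $j'\equiv\pm j\pmod n$, so a single orbit is only possible when $(\mathbb{Z}/m)^{\times}=(\mathbb{Z}/n)^{\times}=\{\pm1\}$, i.e.\ $m,n\in\{1,2,3,4,6\}$. What you can, and should, prove is the normal-form statement of Step 1; the uniqueness asserted in Step 2 should be dropped or restricted to those values of $m$ and $n$.
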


This is a consequence of the Grushko--Neumann Theorem about free
groups, which can be proved, for instance, by cancellation arguments
due to R.\,C.~Lyndon; see \cite[Section~6.3]{Ro82} and
\cite[Appendix]{Co72}.  From Fact~\ref{one-orbit} and \eqref{diagram}
we obtain

\begin{corollary}\label{cor:orbits}
  Let $G = C_m \ast C_n$ for $m,n\in \mathbb{N}$. Suppose that $H$ is
  a homomorphic image of $G$, and let $(h_1,h_2)\in \Gamma_H$.  Then
  there exists $\vartheta \colon G \to H$ and $(g_1,g_2)\in \Gamma_G$
  with $(g_1^{\, \vartheta},g_2^{\, \vartheta})=(h_1,h_2)$ if and only if the
  $\mathcal{A}$-orbit of $(h_1,h_2)$ in $\Gamma_H$ contains an element
  $(\tilde{h}_1,\tilde{h}_2)$ such that
  ${\tilde{h}_1}^{\,m}=\tilde{h}_2^{\,n}=1$.
\end{corollary}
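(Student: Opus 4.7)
The plan is to combine Fact~\ref{one-orbit} with the compatibility relation \eqref{diagram}. Let $x,y$ denote the canonical generators of $G = C_m \ast C_n$, so that $x^m = y^n = 1$ and $(x,y) \in \Gamma_G$.

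For the ``only if'' direction, suppose $\vartheta \colon G \to H$ and $(g_1,g_2) \in \Gamma_G$ satisfy $(g_1^{\, \vartheta}, g_2^{\, \vartheta}) = (h_1,h_2)$. First I would note that $\vartheta$ is automatically surjective, since its image contains the generating set $\{h_1,h_2\}$ of $H$; in particular $(x^{\, \vartheta}, y^{\, \vartheta}) \in \Gamma_H$, so that $\vartheta^*$ is defined on $(x,y)$. By Fact~\ref{one-orbit}, there exists $\alpha \in \mathcal{A}$ with $(g_1,g_2) = (x,y)^\alpha$, and applying \eqref{diagram} then yields
\[
(h_1,h_2) = ((x,y)^\alpha) \vartheta^* = ((x,y) \vartheta^*)^\alpha = (x^{\, \vartheta}, y^{\, \vartheta})^\alpha.
\]
Setting $(\tilde h_1, \tilde h_2) := (x^{\, \vartheta}, y^{\, \vartheta})$, this pair lies in the $\mathcal{A}$-orbit of $(h_1,h_2)$ and satisfies $\tilde h_1^{\, m} = (x^m)^\vartheta = 1$ and $\tilde h_2^{\, n} = 1$, as required.

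For the ``if'' direction, suppose $(\tilde h_1, \tilde h_2)$ lies in the $\mathcal{A}$-orbit of $(h_1,h_2)$ and satisfies $\tilde h_1^{\, m} = \tilde h_2^{\, n} = 1$. The universal property of the free product $C_m \ast C_n$ allows me to define a homomorphism $\vartheta \colon G \to H$ by $x \mapsto \tilde h_1$, $y \mapsto \tilde h_2$; this is well-defined precisely because of the order conditions, and it is surjective because $(\tilde h_1, \tilde h_2)$ generates $H$. Choosing $\alpha \in \mathcal{A}$ with $(\tilde h_1, \tilde h_2)^\alpha = (h_1,h_2)$ and setting $(g_1,g_2) := (x,y)^\alpha$, which lies in $\Gamma_G$ since $\mathcal{A}$ permutes $\Gamma_G$, a further application of \eqref{diagram} gives
\[
(g_1^{\, \vartheta}, g_2^{\, \vartheta}) = ((x,y)^\alpha) \vartheta^* = ((x,y) \vartheta^*)^\alpha = (\tilde h_1, \tilde h_2)^\alpha = (h_1,h_2).
\]

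There is essentially no obstacle: once Fact~\ref{one-orbit} is in hand, the corollary reduces to a bookkeeping exercise using the compatibility formula \eqref{diagram} and the universal property of the free product. The only minor points requiring attention are the automatic surjectivity of $\vartheta$ in the forward direction (so that \eqref{diagram} is applicable) and the well-definedness of $\vartheta$ in the backward direction.
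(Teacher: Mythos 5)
Your proof is correct and follows exactly the route the paper intends: the corollary is stated there without an explicit proof, merely as a consequence of Fact~\ref{one-orbit} and \eqref{diagram}, and your argument fills in precisely those details (including the two minor points you flag, surjectivity of $\vartheta$ and well-definedness via the universal property of the free product).
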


As above, let $H$ be a homomorphic image of~$G$.  We say that a subset
$\Omega\subseteq \Gamma_H$ is \emph{$(m,n)$-free} if there exists no
$(h_1,h_2)\in \Omega$ such that $h_1^{\, m} = h_2^{\, n} = 1$.
Furthermore, we refer to $(h_1,h_2)\in \Gamma_H$ as an
\emph{$(m,n)$-generating pair} if $h_1^{\, m} = h_2^{\, n} = 1$, i.e.,
if the orders of $h_1$ and $h_2$ divide $m$ and $n$, respectively.

We make use of a fruitful observation by D.\,G.~Higman;
see~\cite{Ne56}.  Let $\Omega \subseteq \Gamma_H$ be an
$\mathcal{A}$-orbit, and let
\[
c(\Omega)=\{[h_1,h_2]\mid (h_1,h_2)\in\Omega\}.
\]
Since the identities
\[
[x,y]=x^{-1}[x^{-1},y]^{-1}x=y^{-1}[xy^{-1},y]y=[y,x]^{-1}
\]
hold in any group, the induced Nielsen transformations on $\Gamma_G$
show that for any choice of $(h_1,h_2)\in \Omega$, we have
\[
c(\Omega)\subseteq \{[h_1,h_2]^g\mid g\in H\}\cup \{[h_2,h_1]^g \mid
g\in H\}.
\]
For instance, this shows that the order of $[h_1,h_2]$ is constant for
$(h_1,h_2)\in \Omega$.

We take interest in the special situation, where $H = \PSL(2,q)$ is a
projective special linear group, for a prime power~$q$.  Every element
$h\in H$ has matrix representatives $R(h),-R(h) \in \SL(2,q)$.  Let
$\tr \colon\SL(2,q) \to \mathbb{F}_q$ denote the usual trace map.  The
group commutator yields a canonical map from
$H \times H = \PSL(2,q) \times \PSL(2,q)$ to $\SL(2,p^k)$: for
$(h_1,h_2) \in H \times H$, the element
\[ 
[\![ h_1, h_2 ]\!] = [\pm R(h_1), \pm R(h_2)] \in \SL(2,q)
\]
is independent of the particular choice of representatives.  Combining
this map with the usual trace map, we obtain the \emph{trace
  invariant} 
\[
\tau(h_1,h_2) = \tr([\![h_1,h_2]\!])
\]
 which is constant on
$\mathcal{A}$-orbits $\Omega \subseteq \Gamma_H$.

As mentioned in the introduction, the (maximal) subgroups of finite
projective special linear groups are fully understood; therefore it is
possible to locate generating pairs satisfying additional properties.
Using this approach, D.~McCullough and M.~Wanderley~\cite{McWa11}
showed for instance that, for $q \ge 13$, every non-trivial element of
$\PSL(2,q)$ is a commutator of a generating pair.  For convenience we
state the following consequence of their work.

\begin{theorem}[{\cite[Thm.~2.1]{McWa11}\label{thm:trace-theorem}}]
  Let $H = \PSL(2,q)$, where $q$ is a prime power.  The set 
  $\mathcal{T}(H) = \{ \tau(h_1, h_2) \mid (h_1,h_2) \in \Gamma_H \}
  \subseteq \mathbb{F}_q$
  of trace invariants of generating pairs for $\PSL(2,q)$ satisfies
  \[
  \mathcal{T}(H) = 
  \begin{cases}
    \mathbb{F}_q \smallsetminus \{2\} & \text{if $q \in \{2, 4, 8\}$
      or
      $q \ge 13$,} \\
    \mathbb{F}_q \smallsetminus \{ 1,2 \} & \text{if $q \in \{
      3,9,11\}$,} \\
    \mathbb{F}_q \smallsetminus \{0,2,4\} = \{1,3\} & \text {if $q=5$,} \\
    \mathbb{F}_q \smallsetminus \{0,1,2\} = \{3,4,5,6\} & \text{if $q=7$.}
  \end{cases}
  \]
\end{theorem}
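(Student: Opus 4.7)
The plan is to combine the Fricke--Vogt trace identity with Macbeath's trace parametrisation of pairs in $\SL(2,q)$, and then to rule out proper overgroups via Dickson's classification of subgroups of $\PSL(2,q)$. The Fricke--Vogt identity reads
\[
\tr[A,B] \;=\; (\tr A)^2 + (\tr B)^2 + (\tr AB)^2 - (\tr A)(\tr B)(\tr AB) - 2,
\]
so $\tau(h_1,h_2) = f(\alpha,\beta,\gamma)$ where $\alpha = \tr R(h_1)$, $\beta = \tr R(h_2)$, $\gamma = \tr(R(h_1)R(h_2))$ for a compatible choice of lifts and $f(X,Y,Z) = X^2 + Y^2 + Z^2 - XYZ - 2$. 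Macbeath's work then guarantees that every triple $(\alpha,\beta,\gamma) \in \mathbb{F}_q^{\,3}$ is realised by some pair $(A,B) \in \SL(2,q)^2$, and it describes the subgroup $\langle \overline{A}, \overline{B} \rangle \le \PSL(2,q)$ combinatorially in terms of that triple.

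I would first verify $2 \notin \mathcal{T}(H)$: the equation $f(\alpha,\beta,\gamma) = 2$ is the classical criterion for $\{A,B\}$ to have a common eigenvector in $\overline{\mathbb{F}_q}^{\,2}$, forcing $\langle \overline{A}, \overline{B} \rangle$ to lie inside a Borel subgroup or the normaliser of a (split or non-split) Cartan, each of which is a proper subgroup. For the converse, given $t \in \mathbb{F}_q \setminus \{2\}$, I would produce a triple on the affine surface $\{f = t\} \subseteq \mathbb{F}_q^{\,3}$ whose Macbeath pair generates all of $\PSL(2,q)$. Dickson's classification splits the proper subgroups into three families, each cutting out a restricted trace locus: (i) normalisers of tori, cut out by explicit algebraic conditions on $(\alpha,\beta,\gamma)$; (ii) subfield subgroups $\PSL(2,q_0)$ (and mild extensions) for $q_0$ a proper subfield of $\mathbb{F}_q$, for which $(\alpha,\beta,\gamma) \in \mathbb{F}_{q_0}^{\,3}$; (iii) the exceptional subgroups $\Alt(4)$, $\Sym(4)$, $\Alt(5)$, each permitting only a finite explicit set of trace triples.

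For $q \ge 13$, and likewise for $q \in \{2,4,8\}$ where families (ii) and (iii) largely disappear, a counting argument finishes the job: the fibre $\{f = t\} \subseteq \mathbb{F}_q^{\,3}$ has $\Theta(q^2)$ points, whereas the combined obstruction loci contribute only $O(q)$ points from (i) and (ii) together with $O(1)$ from (iii), leaving an unobstructed triple for every $t \neq 2$. The main obstacle is the small-$q$ regime $q \in \{3,5,7,9,11\}$, where the obstruction loci can cover a substantial proportion of each fibre and the generic argument breaks down. These remaining cases I would handle by direct enumeration: using the conjugacy-class structure of the groups and the $\mathcal{A}$-orbit analysis of Section~\ref{sec2}, one lists the finitely many $\mathcal{A}$-orbits on $\Gamma_H$ and evaluates $\tau$ on each, reading off the exceptional sets $\{1,3\}$ for $q=5$, $\{3,4,5,6\}$ for $q=7$, and the additional excluded value $1$ for $q \in \{3,9,11\}$.
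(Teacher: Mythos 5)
The paper offers no proof of this statement: it is imported verbatim from \cite[Thm.~2.1]{McWa11}, so there is no internal argument to compare yours against. That said, your outline reproduces essentially the strategy of the cited source and of Macbeath's work \cite{Mc67} on which it rests --- the Fricke trace identity, Macbeath's realization of arbitrary trace triples together with his classification of which triples generate proper subgroups under Dickson's list, a counting argument for large $q$, and explicit enumeration for small $q$. As a standalone proof it is still only a sketch: the realization theorem, the precise description of the dihedral and subfield obstruction loci on each fibre $\{f=t\}$ (and the verification that they really contribute only $O(q)$ points, which is not automatic for the dihedral family without quoting Macbeath's explicit criteria), the justification that $\tr[A,B]=2$ forces reducibility, and the computations yielding the exceptional sets for $q\in\{3,5,7,9,11\}$ are all deferred --- and grouping $q\in\{2,4,8\}$ with the asymptotic count is loose, since those cases really need direct inspection too --- but the architecture is correct and is the standard route to this result.
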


Next, we record a basic and well-known fact.

\begin{lemma}\label{order_p}
  Let $A \in \SL(2,q) \smallsetminus \{ I, -I \}$, where $q = p^k$ is
  a power of a prime~$p \ge 3$. Then
  \begin{enumerate}
  \item[(a)] $\tr(A) = 2$ if and only if $A$ has order $p$, and
  \item[(b)] $\tr(A)= -2$ if and only if $A$ has order $2p$.
  \end{enumerate}
\end{lemma}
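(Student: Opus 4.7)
The plan is to read off both statements from the characteristic polynomial $\chi_A(\lambda) = \lambda^2 - \tr(A)\lambda + 1$ of $A \in \SL(2,q)$, exploiting that we work in characteristic~$p$.

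For (a), I first argue the forward direction. If $\tr(A) = 2$ then $\chi_A(\lambda) = (\lambda - 1)^2$, so $1$ is a repeated eigenvalue of $A$. Since $A \ne I$, the matrix $A$ is not diagonalisable and is therefore conjugate over $\mathbb{F}_q$ to the Jordan block $J = \bigl(\begin{smallmatrix} 1 & 1 \\ 0 & 1 \end{smallmatrix}\bigr)$. Since $J^n = \bigl(\begin{smallmatrix} 1 & n \\ 0 & 1 \end{smallmatrix}\bigr)$, the order of $A$ equals $p$. Conversely, if $A$ has order $p$, then every eigenvalue of $A$ in $\overline{\mathbb{F}_q}$ is a root of $\lambda^p - 1 = (\lambda - 1)^p$, hence equals $1$, and so $\tr(A) = 2$.

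For (b), the idea is to reduce to (a) via the involution $A \mapsto -A$ on $\SL(2,q) \smallsetminus \{I, -I\}$, which sends $\tr(A)$ to $-\tr(A)$. Along the way I will record the elementary fact that, since $p$ is odd, $-I$ is the unique element of order~$2$ in $\SL(2,q)$: any such element is diagonalisable with eigenvalues in $\{1, -1\}$, and $\det = 1$ forces these eigenvalues to coincide. The heart of the matter is then the claim that $A$ has order $2p$ if and only if $-A$ has order $p$. One direction: if $A$ has order $2p$, then $A^p$ has order~$2$, hence $A^p = -I$, and $(-A)^p = -A^p = I$ with $-A \ne I$ forces $\mathrm{ord}(-A) = p$. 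Other direction: if $-A$ has order $p$, then $A^{2p} = ((-A)^p)^2 = I$, so $\mathrm{ord}(A) \mid 2p$; the possibilities $1$ and $2$ give $A = \pm I$, and the possibility $p$ is excluded because it would imply $\tr(A) = 2$ by (a), contradicting $\tr(A) = -\tr(-A) = -2$. Combining this equivalence with (a) yields (b).

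The argument is essentially routine; the only subtlety worth flagging is the need to rule out $\mathrm{ord}(A) = p$ in the reverse direction of the claim, which implicitly uses $p \ge 3$ so that $2 \ne -2$ in $\mathbb{F}_q$.
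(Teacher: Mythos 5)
Your proof is correct. The paper does not actually prove this lemma --- it is recorded as ``a basic and well-known fact'' with no argument given --- so there is nothing to compare against; your verification via the characteristic polynomial $\lambda^2 - \tr(A)\lambda + 1$ for part (a) and the reduction $A \mapsto -A$ for part (b) is the standard one, and all the small points that need care (uniqueness of $-I$ as the element of order $2$ for odd $p$, excluding order $p$ in the reverse direction using $2 \ne -2$) are handled properly.
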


\begin{proposition} \label{pro:key} Let $q$ be a prime power such that
  $q \equiv_4 3$.  Suppose that $\SL(2,q) = \langle A,B \rangle$,
  where $A^4 = I$. Then $\tr([A,B]) \ne -2$.
\end{proposition}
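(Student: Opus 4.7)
The plan is to reduce the problem to a trace identity and then exploit both the arithmetic of $\mathbb{F}_q$ and a low-order group-theoretic pigeonhole. First I would pin down the trace of $A$. Since $q \equiv_4 3$ the characteristic is odd, so the only elements of $\SL(2,q)$ whose order divides $2$ are $\pm I$ (their eigenvalues multiply to $1$ and are each $\pm 1$). Combined with $A^4 = I$, this leaves two possibilities: either $A = \pm I$, in which case the image of $\langle A,B\rangle$ in $\PSL(2,q)$ is the cyclic group $\langle \overline{B}\rangle$, contradicting the fact that $\PSL(2,q)$ is non-cyclic for $q \ge 3$; or $A$ has order exactly $4$, so $A^2 = -I$, and comparison with Cayley--Hamilton $A^2 = \tr(A)\,A - I$ forces $\tr(A) = 0$.

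The key step is the Fricke commutator identity
\[
\tr([A,B]) = \tr(A)^2 + \tr(B)^2 + \tr(AB)^2 - \tr(A)\tr(B)\tr(AB) - 2,
\]
valid in $\SL_2$ over any commutative ring, which with $\tr(A) = 0$ collapses to $\tr([A,B]) = \tr(B)^2 + \tr(AB)^2 - 2$. Assuming, for a contradiction, that $\tr([A,B]) = -2$ yields $\tr(AB)^2 = -\tr(B)^2$. If $\tr(B) \ne 0$, this would force $-1 = (\tr(AB)/\tr(B))^2$ to be a square in $\mathbb{F}_q$; but $q \equiv_4 3$ means $|\mathbb{F}_q^\times| = q-1 \equiv_4 2$, so $-1$ is a non-square, a contradiction.

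The remaining, and in my view main, obstacle is the degenerate case $\tr(B) = \tr(AB) = 0$, since there the squaring trick above is vacuous. I would handle it as follows. Cayley--Hamilton now gives $B^2 = -I$ and $(AB)^2 = -I$, so in particular $A^{-1} = -A$ and $B^{-1} = -B$. Expanding $ABAB = -I$ and substituting these inverses yields $AB = -BA$; thus $A$ and $B$ anticommute and every element of $\langle A,B\rangle$ lies in $\{\pm I, \pm A, \pm B, \pm AB\}$. Hence $\lvert\langle A,B\rangle\rvert \le 8$ (in fact $\langle A,B\rangle \cong Q_8$), contradicting $|\SL(2,q)| = q(q^2-1) > 8$ for every $q \ge 3$. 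The delicate insight is recognising that precisely this `zero-trace' case is what collapses the purported generating pair to a quaternion subgroup, rather than requiring any refined study of traces.
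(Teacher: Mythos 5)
Your proof is correct, but it follows a genuinely different route from the paper's. The paper normalises $[A,B]$ (assumed of trace $-2$) to the upper-triangular form $\left(\begin{smallmatrix}-1 & x\\ 0 & -1\end{smallmatrix}\right)$, writes $A$ with trace $0$, and exploits the identity $B^{-1}AB = A[A,B]$ together with invariance of trace under conjugation to force the $(2,1)$-entry of $A$ to vanish, whence $\det A = 1$ gives $a^2 = -1$ directly. You instead invoke the Fricke identity
\[
\tr([A,B]) = \tr(A)^2 + \tr(B)^2 + \tr(AB)^2 - \tr(A)\tr(B)\tr(AB) - 2,
\]
which with $\tr(A)=0$ reduces the hypothesis $\tr([A,B])=-2$ to $\tr(B)^2 + \tr(AB)^2 = 0$, and then apply the non-square property of $-1$ for $q \equiv_4 3$. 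Both arguments ultimately rest on that same quadratic-residue fact, but yours is coordinate-free at the cost of (i) citing the Fricke identity and (ii) handling the degenerate case $\tr(B)=\tr(AB)=0$ separately, which you do correctly by showing $A$ and $B$ then anticommute and generate a copy of $Q_8$, too small to be $\SL(2,q)$. Your preliminary reduction to $\tr(A)=0$ (ruling out $A=\pm I$ and using separability of $x^2-1$ in odd characteristic) is also slightly more careful than the paper's, which passes from $A^4=I$ to ``$A$ has order $4$'' without comment. Overall the two proofs are of comparable length; yours is more conceptual and generalises more readily to other trace conditions, while the paper's is a single self-contained matrix computation.
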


\begin{proof}
  For a contradiction, assume that $\tr([A,B]) = -2$.  Conjugating $A$
  and $B$ by a suitable element of $\GL(2,q)$, we may suppose that
  \[ 
   [A,B] =
  \begin{pmatrix}
    -1 & x \\
    0 & -1
  \end{pmatrix}, \qquad \text{where $x \in \mathbb{F}_q$.}
  \]
  Since $\PSL(2,q) = \langle \overline{A}, \overline{B} \rangle$ is
  not abelian, we deduce that $x \ne 0$.  Since $A$ has order $4$ in
  $\SL(2,q)$, it follows that $\tr(A)=0$, hence we may write
  \[
  A = \begin{pmatrix}
    a & b\\
    c & -a
  \end{pmatrix}
  \qquad \text{for suitable $a,b,c \in \mathbb{F}_q$.}
  \]
  Since
  \[
  B^{-1}AB = A [A,B] = 
  \begin{pmatrix}
    a & b\\
    c & -a
  \end{pmatrix}
  \begin{pmatrix}
    -1 & x \\
    0 & -1
  \end{pmatrix}
  =
  \begin{pmatrix}
    -a & x a-b\\
    -c & x c+a
  \end{pmatrix}
  \]
  also has trace~$0$, we deduce that $c=0$.  From $\det A=1$ we
  conclude that $a^2 = -1$, in contradiction to~$q \equiv_4 3$.
\end{proof}

Using Theorem~\ref{thm:trace-theorem}, we deduce from
Proposition~\ref{pro:key} the following key corollary.

\begin{corollary}\label{cor:general-argument}
  Let $q$ be a prime power such that $q \equiv_4 3$, but $q \ne 3$,
  and let $m \in \mathbb{N}$.  Suppose that $H = \PSL(2,q)$ is
  $(2,m)$-generated.  Then $G = C_2 \ast C_m$ and $H$ yield a negative
  example $(G,H)$ to the Neumanns' Problem.
\end{corollary}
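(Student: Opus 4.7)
The plan is to exhibit a generating pair $(h_1, h_2) \in \Gamma_H$ with trace invariant $\tau(h_1, h_2) = -2$ and to argue that no such pair can be lifted. Since $H$ is $(2,m)$-generated, there is a surjection $G = C_2 \ast C_m \twoheadrightarrow H$, so by Corollary \ref{cor:orbits} combined with the $\mathcal{A}$-invariance of $\tau$, it suffices to produce such a pair and then to rule out the existence of any $(\tilde h_1, \tilde h_2) \in \Gamma_H$ with $\tilde h_1^{\,2} = \tilde h_2^{\,m} = 1$ and $\tau(\tilde h_1, \tilde h_2) = -2$.

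For the first step, I note that $q \equiv_4 3$ and $q \ne 3$ force $q \ge 7$ with $q$ odd, so $-2 \not\equiv 2 \pmod{q}$. A brief case distinction via Theorem~\ref{thm:trace-theorem} then shows $-2 \in \mathcal{T}(H)$: directly for $q = 7$ (since $-2 \equiv 5 \in \{3, 4, 5, 6\}$) and $q = 11$ (since $-2 \equiv 9 \notin \{1, 2\}$), and immediately from $\mathcal{T}(H) = \mathbb{F}_q \smallsetminus \{2\}$ when $q \ge 13$. This provides the required $(h_1, h_2)$.

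For the second step, suppose toward a contradiction that $(\tilde h_1, \tilde h_2) \in \Gamma_H$ satisfies $\tilde h_1^{\,2} = \tilde h_2^{\,m} = 1$ and has trace invariant $-2$. Since $H$ is non-cyclic, $\tilde h_1 \ne 1$, and so $\tilde h_1$ has exact order $2$ in $\PSL(2,q)$. Any lift $A \in \SL(2,q)$ of $\tilde h_1$ therefore satisfies $A^2 \in \{\pm I\}$; but $q$ is odd, so $-I$ is the only involution in $\SL(2,q)$, forcing $A^2 = -I$ and hence $A^4 = I$. Pick any lift $B \in \SL(2,q)$ of $\tilde h_2$: the perfectness of $\SL(2,q)$ for $q \ge 4$ rules out index-$2$ subgroups, and since $\langle A, B \rangle \cdot \{\pm I\} = \SL(2,q)$ we conclude $\langle A, B \rangle = \SL(2,q)$. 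Proposition~\ref{pro:key} then yields $\tr([A, B]) \ne -2$, contradicting $\tau(\tilde h_1, \tilde h_2) = \tr([A, B]) = -2$. The only real subtlety is the translation from $\tilde h_1$ having order~$2$ in $\PSL(2,q)$ to the $A^4 = I$ hypothesis of Proposition~\ref{pro:key}; everything else is an immediate application of the machinery already in hand.
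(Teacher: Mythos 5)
Your proposal is correct and follows essentially the same route as the paper: pick a generating pair with trace invariant $-2$ via Theorem~\ref{thm:trace-theorem}, and use Proposition~\ref{pro:key} together with Corollary~\ref{cor:orbits} to show its $\mathcal{A}$-orbit is $(2,m)$-free. The only difference is that you spell out details the paper leaves implicit (the case check that $-2 \in \mathcal{T}(H)$, and why the lifts satisfy $A^4 = I$ and generate all of $\SL(2,q)$), and these verifications are accurate.
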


\begin{proof}
  By Theorem~\ref{thm:trace-theorem}, we find a generating pair
  $(h_1,h_2) \in \Gamma_H$, where $h_1 = \overline{A}$ and
  $h_2 = \overline{B}$ for $A, B \in \SL(2,q)$, such that
  $\tr([A,B]) = \tau(h_1,h_2) = -2$.  By Corollary~\ref{cor:orbits},
  it suffices to show that $(h_1,h_2)$ represents an
  $\mathcal{A}$-orbit that is $(2,m)$-free.

  Suppose that $(h_1', h_2') = (\overline{A'},\overline{B'})$, for
  $A', B' \in \SL(2,q)$, is a $(2,m)$-generating pair for $H$.  Then
  $(A',B')$ is a generating pair for $\SL(2,q)$ and $A'^4 = I$.  By
  Proposition~\ref{pro:key}, we have $\tr([A',B']) \ne -2$.  Hence
  $\tau(h_1',h_2') \ne \tau(h_1,h_2)$, and $(h_1',h_2')$ cannot lie
  in the $\mathcal{A}$-orbit of $(h_1,h_2)$.
\end{proof}


\section{Proof of the main theorem and corollary}

We continue to use the notation introduced in Section~\ref{sec2}.

\begin{proof}[Proof of Theorem~\ref{thm:main-result}(iii)]
  This follows directly from Corollary~\ref{cor:general-argument}.
\end{proof}

Our proof of Theorem~\ref{thm:main-result}(i) uses group-theoretic
results, due to G.\,A.~Miller.

\begin{proof}[Proof of Theorem~\ref{thm:main-result}(i)]
  Consider the group $H = \PSL(2,q)$, where $q \ge 4$, but
  $q \ne 9$.  By Corollary~\ref{cor:orbits},
  it suffices to show that there is an $\mathcal{A}$-orbit in
  $\Gamma_H$ that is $(2,3)$-free.

  First suppose that $p \ne 2$ and $q \not \in \{5, 7\}$.  Using
  Theorem~\ref {thm:trace-theorem}, we find a generating pair
  $(h_1,h_2) \in \Gamma_H$ such that $\tau(h_1,h_2) = 0$.  This
  implies that $[h_1,h_2] \ne 1$ has order~$2$.  A classical result of
  G.\,A.~Miller~\cite[\S 1]{Miller} implies that any group generated
  by an element of order two and an element of order three whose
  commutator has order two is isomorphic to either $\Alt(4)$ or
  $\Alt(4) \times C_2$.  Thus the $\mathcal{A}$-orbit of $(h_1,h_2)$
  is $(2,3)$-free.

  Now suppose that $p = 2$ or $q \in \{5,7\}$.  Using Theorem~\ref
  {thm:trace-theorem}, we find a generating pair
  $(h_1,h_2) \in \Gamma_H$ such that $\tau(h_1,h_2) \in \{1,-1\}$.
  This implies that $[h_1,h_2] \ne 1$ has order~$3$.  By~\cite[\S
  3]{Miller}, any group generated by an element of order two and an
  element of order three whose commutator has order three is solvable.
  (Indeed, the finitely presented group
  $K = \langle x,y \mid x^2 = y^3 = [x,y]^3 = 1 \rangle$ has derived
  length~$3$; one has $\lvert K:K'' \rvert = 54$ and
  $K'' \cong C_\infty \times C_\infty$.)  Thus the $\mathcal{A}$-orbit
  of $(h_1,h_2)$ is $(2,3)$-free.
\end{proof}

For completeness we remark that it is easy to check that every
generating pair of $\PSL(2,2) \cong \Sym(3)$ and
$\PSL(2,3) \cong \Alt(4)$ respectively lifts to a generating pair of
$C_2 \ast C_3 \cong \PSL(2,\mathbb{Z})$ along a suitable epimorphism.

\begin{proof}[Proof of Theorem~\ref{thm:main-result}(ii)]
  Consider the group $H = \PSL(2,q)$, where $q = p^k$ is a power of a
  prime $p \ge 3$ such that $q \ge 7$, but $q \ne 9$.  By
  Corollary~\ref{cor:orbits}, it suffices to show that there is an
  $\mathcal{A}$-orbit of $\Gamma_H$ that is $(2,p)$-free.  If
  $q \equiv_4 3$, we apply Corollary~\ref{cor:general-argument}, as in
  the proof of part~(iii).

  Now suppose that $q \equiv_4 1$.  The argument we provide actually
  works for all prime powers $q \ge 11$.  Suppose that
  $\PSL(2,q) = \langle h_1,h_2\rangle $ with
  $h_1^{\, 2} = h_2^{\, p} = 1$, where $h_1 = \overline{A}$ and
  $h_2 = \overline{B}$ for $A,B \in \SL(2,q)$.  Then
  $\langle A,B \rangle = \SL(2,q)$, and since there is only one
  conjugacy class of elements of order $4$ in $\SL(2,q)$, we may
  suppose that
  \[
  A = 
  \begin{pmatrix}
    0 & 1\\
    -1 & 0
  \end{pmatrix}.
  \]
  We write 
  \[
  B = 
  \begin{pmatrix}
    a & b\\
    c & d
  \end{pmatrix}, \qquad \text{where $a,b,c,d \in \mathbb{F}_p$.}
  \]
  Since $B$ has order $p$ or $2p$, Lemma~\ref{order_p} shows that the
  trace of $\tr(B) \in \{ 2, -2 \}$.  Put $s = b-c\in \mathbb{F}_q$.  We have
  \[
  [A,B] =\left[ 
    \begin{pmatrix}
      0 & 1\\
      -1 & 0
    \end{pmatrix},
    \begin{pmatrix}
      a & b\\
      c & d
    \end{pmatrix}
  \right] = 
  \begin{pmatrix}
    a^2+c^2 & ab+cd\\
    ab+cd & b^2+d^2
  \end{pmatrix}.
  \]
  Thus $(h_1,h_2) \in \Gamma_H$ has trace invariant
  \[
  \tr([A,B]) = a^2+b^2+c^2+d^2=(a+d)^2+(b-c)^2-2 = s^2+2.
  \]
  By~Theorem~\ref{thm:trace-theorem}, there is an $\mathcal{A}$-orbit
  in $\Gamma_H$ such that the associated trace invariant is not of the
  form $s^2+2$ for $s \in \mathbb{F}_q$.  Such an $\mathcal{A}$-orbit
  is $(2,p)$-free.
\end{proof}

In preparation of the proof of Theorem~\ref{thm:main-result}(iv), we
establish two lemmata that could also be checked directly, using a computer.

\begin{lemma}\label{lem:5}
  If $\SL(2,5) = \langle A, B \rangle$, where
  $A^3, B^3 \in \{ I, -I \}$,  then $\tr([A,B]) \ne -2$.
\end{lemma}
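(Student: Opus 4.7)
My plan is to determine the possible values of $\tr(A)$ and $\tr(B)$, then apply the classical Fricke trace identity to express $\tr([A,B])$ as a polynomial in $t = \tr(AB)$, and finally check that this polynomial cannot equal $-2$ over~$\mathbb{F}_5$.

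First, I would observe that neither $A$ nor $B$ can equal $\pm I$, since otherwise $\langle A,B\rangle$ would lie inside an abelian subgroup of the form $\langle -I, X\rangle$, whereas $\SL(2,5)$ is non-abelian. To compute $\tr(A)$, suppose first that $A^3 = I$; then the minimal polynomial of $A$ divides $X^3 - 1 = (X-1)(X^2+X+1)$ over~$\mathbb{F}_5$. Since $X^2+X+1$ is irreducible over~$\mathbb{F}_5$ (its discriminant $-3$ being a non-square) and $A \ne I$, the characteristic polynomial of $A$ must equal $X^2+X+1$, giving $\tr(A) = -1$. If instead $A^3 = -I$, then $(-A)^3 = I$ with $-A \ne I$, so the same argument applied to $-A$ yields $\tr(A) = 1$. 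Thus $\tr(A) \in \{-1,1\}$, and symmetrically $\tr(B) \in \{-1,1\}$.

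Next, I would invoke the Fricke identity
\[
\tr([A,B]) = \tr(A)^2 + \tr(B)^2 + \tr(AB)^2 - \tr(A)\,\tr(B)\,\tr(AB) - 2,
\]
valid for all $A,B \in \SL(2,q)$. Writing $\epsilon = \tr(A)\,\tr(B) \in \{-1,1\}$ and $t = \tr(AB)$, and using $\tr(A)^2 = \tr(B)^2 = 1$, this collapses to $\tr([A,B]) = t^2 - \epsilon t$.

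Finally, I would verify that $t^2 \mp t = -2$ has no solution in $\mathbb{F}_5$. Since $-2 \equiv 3 \pmod{5}$, the equation $t^2 \mp t - 3 \equiv 0 \pmod{5}$ has discriminant $1 + 12 \equiv 3 \pmod{5}$, and $3$ is a non-square in $\mathbb{F}_5$ (the squares being $\{0,1,4\}$); equivalently, one checks directly that $t^2 \pm t$ attains only the values $\{0,1,2\}$ as $t$ ranges over~$\mathbb{F}_5$. Hence $\tr([A,B]) \ne -2$, as required. The only step that needs any care is using the correct form of the Fricke identity; beyond that, the argument reduces to the routine finite verification above, so I do not anticipate a genuine obstacle.
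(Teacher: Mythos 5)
Your proof is correct, but it takes a genuinely different route from the paper's. The paper first uses Lemma~\ref{order_p} to translate $\tr([A,B]) = -2$ into ``$[A,B]$ has order $10$'', hence ``$[\overline{A},\overline{B}]$ has order $5$ in $\PSL(2,5) \cong \Alt(5)$'', and then checks combinatorially that the commutator of two $3$-cycles in $\Alt(5)$ is never a $5$-cycle; the argument lives entirely inside the permutation group and leans on the exceptional isomorphism with $\Alt(5)$. You instead stay on the matrix side: the minimal-polynomial argument pins down $\tr(A), \tr(B) \in \{1,-1\}$ (the irreducibility of $X^2+X+1$ over $\mathbb{F}_5$ is correctly justified, and the reduction of the case $A^3 = -I$ to $(-A)^3 = I$ is clean), the Fricke identity
\[
\tr([A,B]) = \tr(A)^2 + \tr(B)^2 + \tr(AB)^2 - \tr(A)\tr(B)\tr(AB) - 2
\]
collapses to $t^2 - \epsilon t$, and the quadratic $t^2 - \epsilon t + 2 = 0$ has non-square discriminant $3$ in $\mathbb{F}_5$. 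Note that your argument uses the generation hypothesis only to rule out $A, B \in \{I,-I\}$, so it actually proves a slightly stronger statement. A further payoff of your method is that the identical computation disposes of Lemma~\ref{lem:7} as well: over $\mathbb{F}_7$ one still gets $\tr(A),\tr(B) \in \{1,-1\}$ (the factorisation $X^2+X+1 = (X-2)(X-4)$ forces trace $-1$ for determinant-one matrices), and $t^2 \mp t$ ranges over $\{0,2,5,6\}$, missing $\pm 3$; the paper's proof of that lemma is a considerably longer case analysis on the projective line. The trade-off is that your approach requires quoting (or verifying) the Fricke identity, whereas the paper's is self-contained modulo one two-line permutation computation.
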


\begin{proof}
  By Lemma~\ref{order_p}, it suffices to show that $[A,B]$ does not
  have order~$10$.  For this it suffices to show that
  $[\overline{A},\overline{B}] \in \PSL(2,5)$ does not have order~$5$. 

  Recall that $\PSL(2,5) \cong \Alt(5)$.  Hence it is enough to prove
  that in $\Alt(5)$ no commutator of two $3$-cycles yields a
  $5$-cycle.  Using conjugation in $\Sym(5)$, it suffices to consider
  a single commutator:
  \[
    [(1\,2\,3) , (1\,4\,5)] = (3\,2\,1) (5\,4\,1) (1\,2\,3) (1\,4\,5)
    = (1\,4\,2). \qedhere
  \]
\end{proof}

\begin{lemma}\label{lem:7}
  If $\SL(2,7) = \langle A,B \rangle$, where
  $A^3, B^3 \in \{ I, -I\}$, then $\tr([A,B]) \not\in \{ 3, -3\}$.
\end{lemma}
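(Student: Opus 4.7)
My plan is to replace the group-theoretic style of Lemma~\ref{lem:5} by a direct trace calculation via the Fricke--Klein--Vogt identity
\[
\tr([A,B]) = \tr(A)^{2} + \tr(B)^{2} + \tr(AB)^{2} - \tr(A)\,\tr(B)\,\tr(AB) - 2,
\]
which holds for any $A,B\in\SL(2,q)$. The idea is that the hypothesis restricts $\tr(A)$ and $\tr(B)$ so tightly that, inserted into this formula, the values $\pm 3$ simply cannot be attained in~$\mathbb{F}_{7}$.

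First I would pin down the possible traces of $A$ and~$B$. Since $\SL(2,7)=\langle A,B\rangle$ is non-abelian, neither $A$ nor $B$ equals $\pm I$; combined with $A^{3}\in\{I,-I\}$ this forces $A$ to have order $3$ or~$6$. An order-$3$ element has eigenvalues that are primitive cube roots of unity, whose sum is $-1$, while an order-$6$ element has eigenvalues that are primitive sixth roots of unity, whose sum is~$1$. Hence $\tr(A)\in\{-1,+1\}$, and by the same argument $\tr(B)\in\{-1,+1\}$.

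Next I would set $a=\tr(A)$, $b=\tr(B)$, $c=\tr(AB)$ and use $a^{2}=b^{2}=1$ to collapse the Fricke identity to
\[
\tr([A,B]) = c^{2}-abc = c(c-ab), \qquad \text{with } ab\in\{-1,+1\}.
\]
Suppose, for a contradiction, that $\tr([A,B])=\varepsilon\cdot 3$ for some $\varepsilon\in\{-1,+1\}$. Then $c\in\mathbb{F}_{7}$ must be a root of the quadratic $c^{2}-(ab)c-3\varepsilon=0$, whose discriminant is $(ab)^{2}+12\varepsilon=1+12\varepsilon\in\{13,-11\}\equiv\{6,3\}\pmod{7}$. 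The non-zero squares in $\mathbb{F}_{7}$ are $\{1,2,4\}$, so neither $3$ nor $6$ is a square. No such $c\in\mathbb{F}_{7}$ can exist, which is the required contradiction.

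I do not foresee a serious obstacle: the argument reduces to a short arithmetic check in $\mathbb{F}_{7}$. The only background input is the Fricke--Klein--Vogt identity, a standard $2\times 2$ formula derivable from the Cayley--Hamilton identity $A^{-1}=\tr(A)\,I-A$ together with cyclic invariance of the trace. In contrast to Lemma~\ref{lem:5}, no permutation-group computation in $\PSL(2,7)\cong\GL(3,2)$ is needed, which is what keeps the argument compact.
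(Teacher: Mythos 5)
Your proof is correct, but it takes a genuinely different route from the paper. The paper first observes that $\tr(C)\in\{3,-3\}$ forces $C\in\SL(2,7)$ to have order $8$, hence image of order $4$ in $\PSL(2,7)$, and then rules out a commutator of two order-$3$ elements having order $4$ by a fixed-point analysis of the $2$-transitive action of $\PSL(2,7)$ on $\mathbf{P}^1(\mathbb{F}_7)$, reducing to three explicit permutation computations. You instead invoke the Fricke--Klein--Vogt identity $\tr([A,B])=\tr(A)^2+\tr(B)^2+\tr(AB)^2-\tr(A)\tr(B)\tr(AB)-2$, pin down $\tr(A),\tr(B)\in\{-1,1\}$ from the order constraint (correctly discarding $A=\pm I$ and $B=\pm I$ via non-abelianness), and reduce the claim to the non-solvability of $c^2-(ab)c-3\varepsilon=0$ over $\mathbb{F}_7$; the discriminants $6$ and $3$ are indeed non-squares modulo $7$, so the argument closes. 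Your hypotheses are even weaker than the paper's: you never use that $A$ and $B$ generate all of $\SL(2,7)$, only that $\langle A,B\rangle$ is non-abelian. The trade-off is that your route requires the Fricke identity as external input (easily derived from Cayley--Hamilton, as you note), whereas the paper's stays inside elementary permutation-group reasoning; in return your computation is shorter, free of case analysis, and the identical quadratic-residue argument also disposes of Lemma~\ref{lem:5} (for $q=5$ one gets discriminant $1+12\equiv 3$, again a non-square), so it unifies the two lemmata the paper treats separately.
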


\begin{proof}
  Elements $C \in \SL(2,7)$ with $\tr(C) \in \{ 3, -3 \}$ have
  order~$8$, and the image of such an element in $\PSL(2,7)$ has
  order~$4$.  Hence it suffices to show that
  $[\overline{A},\overline{B}] \in \PSL(2,7)$ does not have order~$4$.
 
  Recall that $\PSL(2,7)$ acts $2$-transitively on the projective line
  $\mathbf{P}^1 (\mathbb{F}_7) = \mathbb{F}_7 \cup \{ \infty \}$.  The
  stabiliser of any pair of points is cyclic of order $3$, and the
  stabiliser of any $2$-element set is dihedral of order~$6$.  For
  instance, the stabiliser of $\{ 0, \infty \}$ is generated by
  \begin{align*}
    & t \colon x \mapsto -\tfrac{1}{x}, && \text{i.e.,} \quad t = (0\,\infty)
                                           (1\,6) (2\,3) (4\,5), \\
    & u_0 \colon x \mapsto  2x, && \text{i.e.,}\quad  u_0 = (0)
                                   (1\,2\,4) (3\,6\,5) (\infty).
  \end{align*}
  Elements of order $4$ act fixed-point-freely
  on~$\mathbf{P}^1 (\mathbb{F}_7)$.  The elements
  $\overline{A}, \overline{B}$ induce fractional linear
  transformations $u,v$ on $\mathbf{P}^1 (\mathbb{F}_7)$, with
  $\lvert \mathrm{Fix}(u) \rvert = \lvert \mathrm{Fix}(v) \rvert = 2$.
  Using conjugation and replacing $u$ by $u^{-1}$, if necessary, we
  may assume that $u = u_0$.

  For a contradiction, assume that $[u,v]$ has order~$4$ so that
  $\mathrm{Fix}([u,v]) = \varnothing$.  We conclude that
  $\mathrm{Fix}(u) = \{0,\infty\}$ and $\mathrm{Fix}(u).v$ are
  disjoint; likewise $\mathrm{Fix}(v)$ and $\mathrm{Fix}(v).u$ are
  disjoint.  The elements of order $3$ form a single conjugacy class
  in $\PSL(2,7)$, and we write $v = u^w$ for suitable
  $w \in \PSL(2,7)$.  Conjugating by $g \in \langle t,u \rangle$ and
  replacing $u$ by $u^{-1}$, if necessary, we may assume that $0.w=1$.
  This leaves three options for $\infty.w$, namely $3$, $6$ and $5$.
  Accordingly, $v$ is one of the transformations
  \begin{align*}
    & v_1 = u^{w_1}, \text{ where } w_1 \colon x \mapsto
      \tfrac{3x+4}{x+4}, && \text{i.e.,}\quad  v_1 = (1) (0\,4\,2)
                            (\infty\,5\,6) (3), \\ 
    & v_2 = u^{w_2}, \text{ where } w_2 \colon x \mapsto
      \tfrac{6x+3}{x+3}, && \text{i.e.,}\quad v_2 = (1) 
                            (4\,3\,\infty) (0\,2\,5) (6), \\
    & v_3  = u^{w_3}, \text{ where } w_3
      \colon x \mapsto  \tfrac{5x+2}{x+2}, && \text{i.e.,}\quad  v_3 = (1)
                                              (0\,3\,6) (2\,4\,\infty) (5)
  \end{align*}
  or one of their inverses.  Replacing $v$ by its inverse, if
  necessary, we may assume that $v$ is one of $v_1, v_2, v_3$.  Direct
  computations show that
  \begin{align*}
    [u,v_1] & = (0\,2)(1\,4)(3\,5)(6\,\infty), \\
    [u,v_2] & = (3)(0\,\infty\,6\,1\,4\,2\,5), \\
    [u,v_3] & = (0\,5)(1\,\infty)(2\,4)(3\,6). \qedhere
  \end{align*}
\end{proof}

\begin{proof}[Proof of Theorem~\ref{thm:main-result}(iv)]
  Consider the group $H = \PSL(2,q)$, where $q$ is a prime power such
  that $q \ge 5$.  By Corollary~\ref{cor:orbits}, it suffices to show
  that there is an $\mathcal{A}$-orbit of $\Gamma_H$ that is
  $(3,3)$-free.

  If $q=5$, Theorem~\ref{thm:trace-theorem} and Lemma~\ref{lem:5}
  imply that at least one $\mathcal{A}$-orbit in $\Gamma_H$ is
  $(3,3)$-free.  If $q = 7$, Theorem~\ref{thm:trace-theorem} and
  Lemma~\ref{lem:7} show that there are $(3,3)$-free
  $\mathcal{A}$-orbits in $\Gamma_H$.  Now suppose that
  $q \not \in \{5,7\}$.  We show below that, if
  $(h_1,h_2) \in \Gamma_H$ and $h_1^3=h_2^3=1$, then
  $\tau(h_1,h_2) \ne 0$.  Thus Theorem~\ref{thm:trace-theorem} yields
  that at least one $\mathcal{A}$-orbit in $\Gamma_H$ is $(3,3)$-free.

  Let $(h_1,h_2)\in \Gamma_H$ be such that $h_1^3=h_2^3=1$. For a
  contradiction, assume that $\tau(h_1,h_2) = 0$.  Then
  $[h_1,h_2] \ne 1$ would have order~$2$.  But a classical result of
  G.\,A.~Miller~\cite[\S 2]{Miller} implies that any group generated
  by two elements of order three whose commutator has order two is
  soluble, in contradiction to $H=\PSL(2,q)$ being non-soluble.
  (Indeed, the finitely presented group
  $K = \langle x,y \mid x^3 = y^3 = [x,y]^2 = 1\rangle$ has order
  $288$ and derived length~$3$; one has $K/K' \cong C_3 \times C_3$
  and $K'' \cong C_2$.)
\end{proof}

\begin{proof}[Proof of Corollary~\ref{cor:infinite-H}]
  Suppose that $H_0 = \langle y_1, y_2 \rangle \cong \PSL(2,q)$ is a
  homomorphic image of
  $G = \langle x_1,x_2 \rangle \cong C_m \ast C_n$ under
  $x_1 \mapsto y_1$ and $x_2 \mapsto y_2$, where $q$ is a prime power
  such that $q \ge 5$ and $m,n \in \mathbb{N}$.  Suppose further that
  $(h_1,h_2) \in \Gamma_{H_0}$ does not lift to a generating pair
  $(g_1,g_2) \in \Gamma_G$ under any homomorphism~$G \to H_0$.

  Observe that $G$ modulo its second derived subgroup $G'$, is
  infinite: indeed,
  $G/G' = \langle \overline{x_1}, \overline{x_2} \rangle \cong C_m
  \times C_n$
  and the Kuro{\v s} Subgroup Theorem implies
  $G'/G'' \cong C_\infty^{\, (m-1)(n-1)}$.  The group $H_0$ is
  perfect, hence $H_0'' = H_0$.  Therefore $H = H_0 \times G/G''$ is a
  homomorphic image of $G$ under $x_1 \mapsto y_1 \overline{x_1}$ and
  $x_2 \mapsto y_2 \overline{x_2}$.  Furthermore
  $(h_1 \overline{x_1}, h_2 \overline{x_2})$ is a generating pair of
  $H$ that does not lift to a generating pair $(g_1,g_2) \in \Gamma_G$
  under any homomorphism $\vartheta \colon G \to H$, because otherwise
  $(h_1,h_2)$ would lift to $(g_1,g_2)$ under the composition
  $\vartheta \pi_1 \colon G \to H_0$, where $\pi_1 \colon H \to H_0$
  is the projection onto the first factor.
\end{proof}


\section{Complements}

In this section we give two complementary results.  First we show that
Theorem~\ref{thm:main-result}(ii) does not extend to $q=5$.

\begin{proposition} \label{pro:PSL25-lift} Let $G=C_2\ast C_5$ and
  $H=\PSL(2,5)$. Then every generating pair of $H$ lifts to a
  generating pair of $G$ along some epimorphism.
\end{proposition}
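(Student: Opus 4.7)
By Corollary~\ref{cor:orbits}, it suffices to show that every $\mathcal{A}$-orbit on $\Gamma_H$ contains a $(2,5)$-generating pair. Identifying $H = \PSL(2,5) \cong \Alt(5)$, we know from Example~\ref{exa:Alt5} that $\Gamma_H$ is the disjoint union of exactly three $\mathcal{A}$-orbits, of sizes $600$, $600$ and $1080$. The plan is therefore to exhibit three $(2,5)$-generating pairs of $\Alt(5)$ and to separate them by two $\mathcal{A}$-orbit invariants, so that they account for all three orbits.

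The invariants I would use are the trace invariant $\tau$ and the conjugacy class of the commutator in $\Alt(5)$, taken up to inversion. By Theorem~\ref{thm:trace-theorem}, $\tau$ takes values in $\{1,3\} \subseteq \mathbb{F}_5$; combining Lemma~\ref{order_p} with the observation that $\tr(C)=1$ forces the characteristic polynomial $x^2-x+1$ and hence $C \in \SL(2,5)$ of order $6$, one sees that $\tau = 1$ corresponds to $[h_1,h_2]$ of order $3$ in $H$, whereas $\tau = 3 = -2$ corresponds to order $5$. The conjugacy class of $[h_1,h_2]$ modulo inversion is an $\mathcal{A}$-orbit invariant by the Higman-type argument recalled in Section~\ref{sec2}. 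In $\Alt(5)$ the $5$-cycles form two conjugacy classes, represented by $\sigma=(1\,2\,3\,4\,5)$ and $\sigma^2=(1\,3\,5\,2\,4)$, and a quick check (for instance, conjugation by $(2\,5)(3\,4) \in \Alt(5)$ sends $\sigma$ to $\sigma^{-1}$) shows that each of these classes is already closed under inversion. Hence the conjugacy class in $\Alt(5)$ of $[h_1,h_2]$ by itself becomes a genuine $\mathcal{A}$-invariant, separating the two orbits of size $600$, both of which correspond to $\tau = 3$.

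Concretely, I would take the $(2,5)$-pairs $((1\,2)(3\,4),\,(1\,3\,5\,2\,4))$, $((1\,2)(3\,4),\,(1\,2\,3\,4\,5))$ and $((1\,2)(3\,4),\,(1\,2\,4\,3\,5))$. Direct computation in $\Alt(5)$ yields the commutators $(1\,2\,5)$, $(1\,3\,5\,4\,2)$ and $(1\,4\,5\,3\,2)$; the first is a $3$-cycle, giving $\tau = 1$, while the latter two are $5$-cycles lying in the $\Alt(5)$-classes of $\sigma$ and $\sigma^2$ respectively, as verified by exhibiting an even permutation that conjugates one onto the other. So both of the last two pairs yield $\tau = 3$ but sit in distinct orbits by the commutator-class invariant. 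To confirm that each of the three pairs actually generates $\Alt(5)$, I would check that $(1\,2)(3\,4)$ does not lie in the dihedral normaliser of the cyclic group generated by the accompanying $5$-cycle, which is the only proper subgroup of $\Alt(5)$ containing both an involution and a $5$-cycle. The main delicacy in the argument is the verification that the two conjugacy classes of $5$-cycles in $\Alt(5)$ are inversion-closed: without this step, the inversion identification inherent in Higman's observation would merge the two size-$600$ orbits under the commutator-class invariant, and a finer invariant would be needed to tell them apart.
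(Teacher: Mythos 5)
Your proof is correct and follows essentially the same strategy as the paper's: reduce via Corollary~\ref{cor:orbits} and the three-orbit count from Example~\ref{exa:Alt5} to exhibiting three $(2,5)$-generating pairs, one per $\mathcal{A}$-orbit, separated by the trace invariant together with the Higman-type conjugacy-class invariant of the commutator. The only difference is cosmetic -- you compute with permutations in $\Alt(5)$ and separate the two $\tau=3$ orbits by the two (inversion-closed) classes of $5$-cycles, whereas the paper computes with matrices in $\SL(2,5)$ and checks non-conjugacy of $[A,C]$ to $\pm[B,D]^{\pm1}$ there.
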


\begin{proof}
  In view of Corollary~\ref{cor:orbits}, it suffices to show that
  every $\mathcal{A}$-orbit in $\Gamma_H$ contains a
  $(2,5)$-generating pair.  The number $\mathcal{A}$-orbits is known
  to be~$3$; compare Example~\ref{exa:Alt5}.

  Consider in $\SL(2,5)$ the elements
  \[
  A = 
  \begin{pmatrix} 0 & 1 \\ -1 & 0 \end{pmatrix},
  \quad
  B = 
  \begin{pmatrix} 0 & 3 \\ 3 & 0 \end{pmatrix},
  \quad
  C = 
  \begin{pmatrix} 1 & 1 \\ 0 & 1 \end{pmatrix},
  \quad
  D = C^2 =  
  \begin{pmatrix} 1 & 2 \\ 0 & 1 \end{pmatrix}.
  \]
  It is straightforward to check that $(\overline{A},\overline{C})$,
  $(\overline{A},\overline{D})$ and $(\overline{B},\overline{D})$ are
  $(2,5)$-generating pairs for~$H$.  We claim that they are
  representatives for the three distinct $\mathcal{A}$-orbits
  in~$\Gamma_H$.  Indeed,
  \[
  [A,C] = 
  \begin{pmatrix} 1 & 1 \\ 1 & 2 \end{pmatrix},
  \quad 
  [A,D] = 
  \begin{pmatrix} 1 & 2 \\ 2 & 0 \end{pmatrix},
 \quad
  [B,D] = 
  \begin{pmatrix} 1 & 2 \\ 3 & 2 \end{pmatrix}
  \]
  so that 
  \[
  \tau(\overline{A},\overline{C}) = \tr([A,C]) = 3, \quad
  \tau(\overline{A},\overline{D}) = \tr([A,D]) = 1, \quad
  \tau(\overline{B},\overline{D}) = \tr([B,D]) = 3.
  \]  
  Thus it remains to show that $(\overline{A},\overline{C})$ and
  $(\overline{B},\overline{D})$ lie in distinct $\mathcal{A}$-orbits.
  This follows from the fact that $[A,C]$ is not conjugate to any of
  the matrices $[B,D]$, $[D,B]$, $-[B,D]$, $-[D,B]$.  (The latter two
  can be ruled out by their trace, the former two are ruled out by
  direct computation.)
\end{proof}

Finally, we justify that the infinite dihedral group
$G = C_2 \ast C_2$ does not yield negative examples to
the Neumanns' Problem

\begin{proposition}\label{Diedergruppe}
  Let $G = C_2 \ast C_2$ and let $H = \langle h_1, h_2 \rangle$ be a
  homomorphic image of~$G$.  Then there exists a homomorphism
  $\vartheta \colon G \to H$ and a generating pair
  $(g_1, g_2) \in \Gamma_G$ such that
  $(g_1^{\, \vartheta}, g_2^{\, \vartheta}) = (h_1,h_2)$.
\end{proposition}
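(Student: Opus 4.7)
The plan is to invoke the machinery of Section~\ref{sec2}. Since $G = C_2 \ast C_2$ corresponds to the case $m = n = 2$ in Fact~\ref{one-orbit}, the set $\Gamma_G$ is a single $\mathcal{A}$-orbit, and by Corollary~\ref{cor:orbits} it suffices to show that every $\mathcal{A}$-orbit in $\Gamma_H$ contains a $(2,2)$-generating pair, i.e.\ a pair $(\tilde{h}_1, \tilde{h}_2)$ with $\tilde{h}_1^{\,2} = \tilde{h}_2^{\,2} = 1$. Now, as $H$ is a homomorphic image of~$G$, it is generated by two involutions, so $H$ is either trivial, $C_2$, or a dihedral group $D_{2n}$ with $n \in \mathbb{N} \cup \{\infty\}$ (with the convention $D_4 \cong C_2 \times C_2$). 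In the abelian cases ($\lvert H \rvert \le 4$) every element of $H$ already has order dividing~$2$, so the given pair $(h_1, h_2)$ is already $(2,2)$-generating and nothing has to be done.

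The substantive case is when $H$ is a non-abelian dihedral group (finite of order at least~$6$, or infinite). Let $R \trianglelefteq H$ denote the cyclic rotation subgroup of index~$2$, whose complement $H \smallsetminus R$ consists of reflections, all of order~$2$. Since $R$ is abelian while $H$ is not, at least one of $h_1, h_2$ must lie in $H \smallsetminus R$; after swapping (a single application of $\alpha_3$) I may assume $h_2$ is a reflection, so in particular $h_2^{-1} = h_2$. If additionally $h_1^{\,2} = 1$, we are done. Otherwise $h_1$ is a non-involution, hence necessarily a non-involution rotation lying in~$R$, and I would apply the Nielsen move $\alpha_2$ from \eqref{nielsen} to replace $(h_1, h_2)$ by $(h_1 h_2^{-1}, h_2) = (h_1 h_2, h_2)$. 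The element $h_1 h_2$ lies in $R \cdot (H \smallsetminus R) = H \smallsetminus R$ and is therefore a reflection of order~$2$, so the new pair is $(2,2)$-generating and lives in the same $\mathcal{A}$-orbit as $(h_1, h_2)$. There is no real obstacle here: once the rotation/reflection dichotomy in $H$ is recognised, a single Nielsen move closes the argument.
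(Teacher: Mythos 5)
Your proof is correct and is essentially the paper's argument in different clothing: the paper likewise reduces to the dihedral structure of $H$, notes that at least one generator is a reflection, and, when the other generator $h_2$ is a shift, defines the epimorphism by $y \mapsto h_1 h_2$ so that the generating pair $(x, xy)$ of $G$ lifts $(h_1,h_2)$ --- which is exactly your Nielsen move $\alpha_2$ read backwards. The only difference is presentational: you route the bookkeeping through Fact~\ref{one-orbit} and Corollary~\ref{cor:orbits}, whereas the paper writes down the epimorphism and the lifting pair by hand.
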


\begin{proof}
  The infinite dihedral group
  $G = \langle x, y \mid x^2 = y^2 = 1 \rangle$ is just infinite; its
  non-trivial proper homomorphic images are exactly the finite
  dihedral groups $D_{2m}$, for $m \in \mathbb{N}$.  Avoiding trivial
  and easy cases, we may suppose that $H = D_{2m}$ for $m \ge 3$.

  The group $D_{2m}$ consists of $m$ shifts and $m$ reflections.  At
  least one of $h_1$ and $h_2$ is a reflection.  Let's suppose $h_1$
  is a reflection.  If $h_2$ is also a reflection, then there is a
  homomorphism mapping $g_1 = x$ to $h_1$ and
  $g_2 = y$ to~$h_2$.  If $h_2$ is a shift (necessarily of order $m$),
  then there is a homomorphism mapping $g_1 = x$ to $h_1$ and
  $g_2 = xy$ to~$h_2$.   
\end{proof}
 

\end{document}